\renewcommand{\baselinestretch}{1.3} % note: also appendix
\newtheorem{theorem}{Theorem}
\newtheorem{lemma}[theorem]{Lemma}
\newcommand{\N}{\mathbb{N}}
\newcommand{\Prob}{{\mathbb{P}}}
\title{Approximate results for a generalized secretary problem}
\author{%
Chris Dietz, Dinard van der Laan, Ad Ridder\thanks{
Corresponding author's address:
Ad Ridder, Department of Econometrics and Operations Research,
Vrije University Amsterdam, The Netherlands;
email: {\tt aridder@feweb.vu.nl}}\\[0.2cm]
{\it Vrije University, Amsterdam, Netherlands}\\[0.2cm]
email \url{{cdietz,dalaan,aridder}@feweb.vu.nl}
}
\begin{document}
\maketitle

\begin{abstract}

\noindent A version of the classical secretary problem is studied, in which one is interested in selecting one of the
$b$ best out of a group of $n$ differently ranked persons who are presented one by one in a random order. It is assumed
that $b\geq 1$ is a preassigned number. It is known, already for a long time, that for the optimal policy one needs to
compute $b$ position thresholds, for instance via backwards induction. In this paper we study approximate policies,
that use just a single or a double position threshold, albeit in conjunction with a level rank. We give exact and
asymptotic (as $n\to\infty$) results, which show that the double-level policy is an extremely accurate approximation.
\end{abstract}

\par\bigskip\noindent
{\bf Keywords:} Secretary Problem; Dynamic Programming; Approximate Policies

%%%%%%%%%%%%%%%%%% Document start %%%%%%%%%%%%%%%%%%%%%

\section{Introduction}\label{s:intro}
The classical secretary problem is a well known optimal stopping problem
from probability theory. It is usually described by different
real life examples, notably the process of hiring a secretary.
Imagine a company manager in need of a secretary.
Our manager wants to hire only the best secretary from
a given set of $n$ candidates, where $n$ is known. No candidate
is equally as qualified as another. The manager decides to
interview the candidates one by one in a random fashion.
Every time he has interviewed a candidate he has to decide
immediately whether to hire her or to reject her
and interview the next one.
During the interview process he can only judge
the qualities of those candidates he has already interviewed.
This means that for every candidate he has observed, there might
be an even better qualified one within the set of candidates yet
to be observed. Of course the idea is that by the time only a small
number of candidates remain unobserved, a recently interviewed
candidate that is relatively best will probably also be the overall
best candidate.
\par\bigskip\noindent
There is abundant research literature on this classical secretary problem,
for which we refer to Ferguson \cite{ferguson} for an historical note and an extensive bibliography.
The exact optimal policy is known, and may be derived by various methods, see for instance
Dynkin and Yushkevich \cite{dynkinyushkevich}, and Gilbert and Mosteller \cite{gilbertmosteller}.
Also, many variations and generalizations of the original problem have
been introduced and analysed. One of these generalizations is the
focus of our paper, namely the problem to select one of the $b$ best,
where $1\leq b\leq n$ is some preassigned number
(notice that $b=1$ is the classical secretary problem). Originally, this
problem was introduced by Gusein-Zade \cite{guseinzade}, who derived the structure
of the optimal policy: there is a sequence $0\leq s_1<s_2<\cdots<s_b\leq s_{b+1}=n-1$ of
position thresholds such that when candidate $i$ is presented, and
judged to have relative rank $k$ among the first $i$
candidates\footnote{It is most convenient to rank the candidates $1,2,\ldots,n$, with
rank 1 being the best, rank 2 being second best, etc.},
then the optimal decision says
\begin{align*}
& i\leq s_1: \; \mbox{continue whatever $k$ is};\\
& s_j+1 \leq i \leq s_{j+1}\; (\mbox{where $j=1,\ldots,b$}): \;
\begin{cases}
\mbox{stop if $k\leq j$}\\
\mbox{continue if $k>j$};
\end{cases}\\
& i=n: \; \mbox{stop whatever $k$ is}.
\end{align*}
Furthermore, \cite{guseinzade} gave an  algorithm
to compute these thresholds, and derived
asymptotic expressions (as $n\to\infty$) for the $b=2$ case.
Also Frank and Samuels \cite{franksamuels} proposed an algorithm,
and gave the limiting (as $n\to\infty$) probabilities and
limiting proportional thresholds $s_j/n$.
\par\bigskip\noindent
The algorithms of \cite{franksamuels,guseinzade} are based on dynamic programming, which
means that the optimal thresholds $s_j$, and the optimal winning probability
are determined numerically. The next interest was to find analytic expressions.
To our best knowledge, this has been resolved only for $b=2$ by
Gilbert and Mosteller \cite{gilbertmosteller},
and for $b=3$ by Quine and Law \cite{quinelaw}. Although the latter claim that their approach is
applicable to produce exact results for any $b$, it is clear that the expressions
become rather untractable for larger $b$. This has inspired us to
develop approximate results for larger $b$.
\par\bigskip\noindent
We consider two approximate policies for the general $b$ case: single-level policies,
and double-level policies. A single-level policy is given by a single position
threshold $s$ in conjuction with a rank level $r$, such that
when candidate $i$ is presented, and
judged to have relative rank $k$ among the first $i$
candidates, then the policy says
\begin{align*}
& i\leq s: \; \mbox{continue whatever $k$ is};\\
& s+1 \leq i \leq n-1\; : \;
\begin{cases}
\mbox{stop if $k\leq r$}\\
\mbox{continue if $k>r$};
\end{cases}\\
& i=n: \; \mbox{stop whatever $k$ is}.
\end{align*}
A double-level policy is given by two position
thresholds $s_1<s_2$ in conjuction with two rank levels $r_1<r_2$, such that
when candidate $i$ is presented, and
judged to have relative rank $k$ among the first $i$
candidates, then the policy says
\begin{align*}
& i\leq s_1: \; \mbox{continue whatever $k$ is};\\
& s_1+1 \leq i \leq s_2\; : \;
\begin{cases}
\mbox{stop if $k\leq r_1$}\\
\mbox{continue if $k>r_1$};
\end{cases}\\
& s_2+1 \leq i \leq n-1\; : \;
\begin{cases}
\mbox{stop if $k\leq r_2$}\\
\mbox{continue if $k>r_2$};
\end{cases}\\
& i=n: \; \mbox{stop whatever $k$ is}.
\end{align*}
We shall derive the exact winning probability for these two approximate policies,
when the threshold and level parameters are given. These expressions
can then used easily to compute the optimal single-level and the optimal
double-level policies, i.e., we optimize the winning probabilities
(under these level policies) with respect to their threshold and level parameters.
The most important result is that the winning probabilities of the optimal
double-level policies are extremely close to the winning probabilities of the optimal
policies (with the $b$ thresholds), specifically for larger $b$.
In other words, we have found explicit formulas that approximate closely
the winning probabilities for this generalized secretary problem.
As an example, we present in Table \ref{t:resdlp} the relative
errors in percentages for a few $n,b$ combinations (a more extended
table can be found in Section \ref{s:num}).
\begin{table}[htb]
\begin{center}
\caption{\textit{Relative errors (\%) of the optimal double-level policies.}}
\label{t:resdlp}
\medskip
\begin{tabular}{l | r r r }
& $n=100$ & $n=250$ & $n=1000$\\
\hline
$b=10$ & 1.702 & 1.841 & 1.911 \\
$b=25$ & 0.036 & 0.066 & 0.084
\end{tabular}
\end{center}
\end{table}
Our second contribution is the derivation of asymptotic results
for our level policies as $n\to\infty$. This means both for the (optimal) winning
probabilities, and the (optimal) fractional thresholds $s_j/n$.
\par\bigskip\noindent
The paper is organized as follows.
Section \ref{s:slp} contains the derivation
of the exact winning probability of the single-level policy,
and the associated asymptotic results;
Section \ref{s:dlp}  deals with the two-level policies. Finally,
in Section \ref{s:num}
we demonstrate how accurate the approximate policies are performing.

\section{Single-level policies}\label{s:slp}

Before we consider the single-level polcies we first introduce some notation we use throughout this paper. The absolute
rank of the $ i $-th object is denoted by $ X_i $, while the relative rank of the $ i $-th object is denoted by $ Y_i
$. Ranks run from $1$ to $n$, and we say that rank $i$ is higher than rank $j$ when $i<j$. Moreover for natural numbers
$ x $ and $ n $, the falling factorial $ x (x-1)\ldots (x-n+1) $ is denoted by $ (x)_n $. Note that $ (x)_n $ is the
number of $ n $-permutations of a set containing $x $ elements which is also the number of different injective
functions from $ \{1,2,\ldots,n\} $ to $ \{1,2,\ldots,x\}$. It is easily seen that $ (x)_n = n! \binom{x}{n} $ and thus
we have
\begin{equation}\label{idfalfac}
\frac{(x)_n}{(y)_n} = \frac{\binom{x}{n}}{\binom{y}{n}}.
\end{equation}

\subsection{Winning probability for single-level policies}\label{s:wslp}

In this variant of the well-known secretary problem the objective is to pick one of the $ b $ best objects from $ n $
objects consecutively arriving one by one in the usual random fashion known from the classical problem. In this
subsection we consider the performance of the class of so-called single-level policies which is determined by two
integer parameters $ s $ (called the position threshold) and $ r $ (called the rank level). Following such a
single-level policy objects are considered to be selected from position $s +1$ and then the first one encountered with
a relative rank higher or equal than $ r $ is picked. Moreover, we assume that if the first $ n-1 $ items are not
picked that then the last object is certainly picked independent of its relative rank $ Y_n $. Let $ \pi = \pi(s,r) $
be such a policy with $ 0 \leq s \leq n-1 $ and $ 1 \leq r \leq b $. To analyze the performance of this class of
policies an explicit expression for the probability $ P_{\rm SLP}(\pi) $ of success when applying the single-level
policy $ \pi = \pi(s,r) $ will be obtained. Thus $ P_{\rm SLP}(\pi) $ is the probability that an object is picked with
absolute rank higher than or equal to $ b $ if policy $ \pi $ is applied. Note: when we wish to express explicitly
parameters ($n,b,s,r$) we denote it, otherwise we omit it.
%This probability is easy obtained for the special case that $ s =n-1$.

\begin{theorem}\label{t:slpwinprob}
For $ r=1,2,\ldots,b $ we have that $P_{\rm SLP}(\pi(s,r)) = \frac{b}{n}$ if
$0\leq s \leq r-1$ or $s = n-1$, and otherwise
\begin{equation}
\label{e:slpwinprob}
%\begin{split}
P_{\rm SLP}(\pi(s,r)) = \sum_{i=s+1}^{n-1} \, \frac{(s)_r}{(i-1)_r}
\left( \frac{r}{n} + \frac1n\sum_{j=r+1}^b \sum_{k=1}^r
\frac{\binom{j-1}{k-1}\binom{n-j}{i-k}}{\binom{n-1}{i-1}}\right)
+ \frac{(s)_r}{(n-1)_r} \, \frac{b}{n}.
%\end{split}
\end{equation}
\end{theorem}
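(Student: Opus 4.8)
The plan is to condition on the position at which the policy actually selects an object, and to exploit the classical fact that in a uniformly random arrangement the relative ranks $Y_1,\dots,Y_n$ are independent, with $Y_i$ uniform on $\{1,\dots,i\}$. I would first dispose of the two boundary cases. When $s=n-1$ the policy never reaches a selection before position $n$, so the last object is always picked; since $X_n=Y_n$ is uniform on $\{1,\dots,n\}$, the success probability is $b/n$. When $0\le s\le r-1$, the first object offered for selection sits at position $s+1\le r$, so its relative rank $Y_{s+1}\le s+1\le r$ automatically triggers a stop; hence the object in position $s+1$ is always picked, and again its absolute rank is uniform, giving $b/n$.

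For the generic case $r\le s\le n-2$ I would write
\[
P_{\rm SLP}(\pi(s,r))=\sum_{i=s+1}^{n-1}\Prob(\text{select at }i,\ X_i\le b)+\Prob(\text{select at }n,\ X_n\le b).
\]
Reaching position $i$ without having stopped means $Y_j>r$ for $j=s+1,\dots,i-1$; by independence this probability is $\prod_{j=s+1}^{i-1}\frac{j-r}{j}$, which, writing the numerator and denominator products as ratios of factorials, simplifies to $(s)_r/(i-1)_r$. Still by independence, what happens at position $i$ is unaffected by the past, so the summand factors as $\frac{(s)_r}{(i-1)_r}\,\Prob(Y_i\le r,\ X_i\le b)$. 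A direct permutation count then gives the joint law $\Prob(X_i=j,\,Y_i=k)=\binom{j-1}{k-1}\binom{n-j}{i-k}\big/\!\bigl(n\binom{n-1}{i-1}\bigr)$: place the rank-$j$ object at position $i$, choose which $k-1$ of the $j-1$ superior objects and which $i-k$ of the $n-j$ inferior objects precede it, and normalise by $n!$.

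Summing this joint law over the winning region $1\le j\le b$, $1\le k\le r$ yields $\Prob(Y_i\le r,\,X_i\le b)$. The key simplification is to split the outer sum at $j=r$: for each fixed $j$ with $1\le j\le r$, the inner sum $\sum_{k=1}^{r}\binom{j-1}{k-1}\binom{n-j}{i-k}$ is a Vandermonde convolution (the upper limit $r$ is harmless, since $\binom{j-1}{k-1}=0$ for $k>j$), collapsing to $\binom{n-1}{i-1}$; summing over $j=1,\dots,r$ therefore contributes exactly $r$ after dividing by $n\binom{n-1}{i-1}$, producing the term $r/n$. The remaining block $r+1\le j\le b$ is left as the double sum appearing in \eqref{e:slpwinprob}. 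Finally, for the $i=n$ term I would use $X_n=Y_n$ together with the independence of $Y_n$ from $Y_{s+1},\dots,Y_{n-1}$: the reach probability is $(s)_r/(n-1)_r$ and the conditional success probability is $b/n$, yielding the last summand. The main obstacle is the bookkeeping in the permutation count and recognising the Vandermonde identity that makes the $j\le r$ portion collapse to the clean constant; once those are in hand, the result is just the assembly of these pieces.
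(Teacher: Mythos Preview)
Your proof is correct and follows essentially the same route as the paper: decompose by the stopping position, use the independence of $(Y_{s+1},\dots,Y_{i-1})$ from $(X_i,Y_i)$, and split the success event at position $i$ into the $j\le r$ and $j>r$ blocks. The only cosmetic differences are that the paper obtains the reach probability $(s)_r/(i-1)_r$ by a direct permutation count (Lemma~\ref{l:minprob}) rather than as the product $\prod_{j=s+1}^{i-1}(j-r)/j$, and derives the $r/n$ term from the observation $\Prob(Y_i\le r\mid X_i=j)=1$ for $j\le r$ (Lemma~\ref{l:hypprob}) rather than via the Vandermonde identity; the two presentations are equivalent.
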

Before proving this expression we need two auxiliary results.
\begin{lemma}\label{l:minprob}
For $ s =0,1,\ldots,n-2 $ and $ i = s+2,s+3,\ldots,n $ we have that
\[
\Prob(\min\{Y_{s+1},Y_{s+2},\ldots,Y_{i-1}\} > r) =
\begin{cases}0 & \mbox{ if } s<r \\
\frac{(s)_r}{(i-1)_r} & \mbox{ if } s \geq r.
\end{cases}
\]
\end{lemma}
\begin{proof}
Let $ A $ be the event $ \min\{Y_{s+1},Y_{s+2},\ldots,Y_{i-1}\} > r $.
If $ s<r $ then $ Y_{s+1} \leq r $ and thus $\Prob(A) = 0 $. For $ s \geq r $, $ i=s+2,\ldots,n-1 $
we have that $ \Prob(A) $ is the probability that the rankings
$1,2,\ldots,r $ are contained in the first $ s $ positions of a random permutation of the
numbers $ 1,2,\ldots,i-1 $.
Thus $ \Prob(A) $ is the number of distinct injective functions from $ \{1,2,\ldots,r\} $
to $ \{1,2,\ldots,s\} $ divided
by the number of distinct injective functions from $ \{1,2,\ldots,r\} $
to $ \{1,2,\ldots,i-1\} $. Hence $ \Prob(A) =\frac{(s)_r}{(i-1)_r} $.
\end{proof}
\begin{lemma}\label{l:hypprob}
For $ i =s+1,s+2,\ldots,n-1 $ and $ r =1,2,\ldots,b $ we have that
\[ \Prob(Y_i \leq r |X_i = j) =
\begin{cases} 1 & \mbox{ for }  j=1,2,\ldots,r \\
\sum_{k=1}^r \frac{\binom{j-1}{k-1}\binom{n-j}{i-k}}{\binom{n-1}{i-1}}
 & \mbox{ for } j=r+1,r+2,\ldots,b.
\end{cases}
\]
\end{lemma}
\begin{proof}
We have that $ Y_i \leq X_i $ and thus $ \Prob(Y_i \leq r |X_i = j) = 1 $ if $ j \leq r $.
Suppose that $ j \in \{r+1,r+2,\ldots,b\} $. If $ X_i = j $ then $ Y_i = k $
for $ k \in \{1,2,\ldots,r\} $ if $ k - 1 $ objects from the $ j-1 $ objects with
absolute ranking smaller than $ j $ are among the first $ i-1 $ objects.
Thus $ \Prob(Y_i = k | X_i = j)= \Prob(H=k-1)$ where $ H $ is a hypergeometrically distributed
random variable with population size $ n-1 $, sample size
$ i-1 $ and $ j-1 $ successes in the population. Hence
\[
\Prob(Y_i \leq r |X_i = j) = \sum_{k=1}^r \Prob(H=k-1) = \sum_{k=1}^r
\frac{\binom{j-1}{k-1}\binom{n-j}{i-k}}{\binom{n-1}{i-1}}.
\]
\end{proof}

\begin{proof} (Of Theorem \ref{t:slpwinprob}.)
The case $s=n-1$ is trivial because then
$P_{\rm SLP}(\pi(n-1,r)) = \Prob(X_n \leq b) = \frac{b}{n}$.

The cases $s=0,\ldots,r-1$ are also trivial because then for sure $Y_{s+1}\leq r$,
and thus $P_{\rm SLP}(\pi(s,r)) = \Prob(X_{s+1} \leq b) = \frac{b}{n}$.

Now consider the `general' case.
For $ i = s+1,s+2,\ldots,n $ and $ j=1,2,\ldots,b $ let $ A^i_j $ be the event that $ X_i = j $
and policy $ \pi(s,r) $ picks the object at position $ i $:
\[ A^i_j = \{\min\{Y_{s+1},Y_{s+2},\ldots,Y_{i-1}\} > r, Y_i \leq r, X_i = j \}.
\]
Thus,
\begin{align*}
P_{\rm SLP}(\pi(s,r))&= \sum_{i=s+1}^n \sum_{j=1}^b \Prob(A^i_j)\\
&=\sum_{j=1}^b \Prob(A^{s+1}_j) + \sum_{i=s+2}^{n-1} \sum_{j=1}^b \Prob(A^i_j)
+\sum_{j=1}^b \Prob(A^n_j).
\end{align*}
Cases $i=s+1$ and $i=n$ are treated seperately.
Notice that for $ k < i $ the relative ranks $ Y_k $ are independent of both $ X_i $ and $ Y_i $,
thus (for $s+2\leq i\leq n-1$)
\begin{align*}
\Prob(A^i_j) &= \Prob(\min\{Y_{s+1},Y_{s+2},\ldots,Y_{i-1}\} > r , Y_i \leq r ,X_i = j)\\
& = \Prob(\min\{Y_{s+1},Y_{s+2},\ldots,Y_{i-1}\} > r )\, \Prob(Y_i \leq r ,X_i = j)\\
& = \Prob(\min\{Y_{s+1},Y_{s+2},\ldots,Y_{i-1}\} > r )\,\Prob(Y_i \leq r |X_i = j)\, \Prob(X_i=j),
\end{align*}
with $\Prob(X_i=j)=\frac1n$, and the other two factors were determined in Lemma \ref{l:minprob}
and  Lemma \ref{l:hypprob}.
For $i=s+1$:
\[
\Prob(A^{s+1}_j)= \Prob(X_{s+1} = j, Y_{s+1} \leq r)=
 \Prob(Y_{s+1} \leq r |X_{s+1} = j) \, \Prob(X_{s+1}=j),
 \]
and then apply Lemma \ref{l:hypprob} while noticing that $(s)_r/(i-1)_r=1$.
For $i=n$:
\begin{align*}
\Prob(A^n_j) & = \Prob(\min\{Y_{s+1},Y_{s+2},\ldots,Y_{n-1}\} > r, X_n = j)\\
&=
\Prob(\min\{Y_{s+1},Y_{s+2},\ldots,Y_{n-1}\} > r) \Prob(X_n = j),
\end{align*}
and apply Lemma \ref{l:minprob}.
\end{proof}
We defer the comparison of the performance of single-level policies with the optimal
policy to Section \ref{s:num}.
\subsection{Asymptotic results for single-level policies}\label{ss:slpass}
For any $ n \in \N $ the probability of successfully applying a single-level
policy $ \pi(s,r) $ is given by \eqref{e:slpwinprob}. Moreover, by enumeration,
marginal analysis, and/or
dynamic programming the values of $ s $ and $ r $ maximizing $ P_{\rm SLP}(\pi(s,r)) $
may be obtained (see Section \ref{s:num}), but in general the computation time increases
if $ n $ gets larger.
However, in the limit $ n \rightarrow \infty $ the expression given by \eqref{e:slpwinprob}
may be simplified.
In this section we will find asymptotic results on the performance of an important
family of single-level policies for $ n \rightarrow \infty $. To obtain these asymptotic
results we restrict to the family of single-level policies for
which there exists some $ n_0 \in \N $ , $ r_0 \in \{1,2,\ldots,b\} $ and $ 0<\alpha<1 $
such that $ r = r_0 $ for all $ n \geq n_0 $ and
$ \lim_{n \rightarrow \infty} \frac{s}{n} = \alpha $.
In other words we assume that the rank level $ r $ is fixed for large $ n $ while
the position threshold $ s \approx \alpha n $.
This is motivated by numerical evidence from dynamic programming that
optimal single-level policies have these asymptotical properties. Under this assumption
the optimal values of $ r_0 $ and $ \alpha $ should depend (only) on $ b $. The idea is
that by obtaining an asymptotic simplification of \eqref{e:slpwinprob} for all positive
integers $ b $ the corresponding asymptotical optimal values for $ r_0 $ and
$ \alpha $ could be obtained analytically.
\begin{theorem}\label{t:slpwinprobass}
Fix $r\in\{1,\ldots,b\}$, and let $s=\alpha n+o(n)$ (as $n\to\infty$), then
\begin{equation}\label{e:slpwinprobass}
\lim_{n\to\infty} P_{\rm SLP}^{(n)}(\pi(s,r)) =
\alpha^r\int_\alpha^1\frac{1}{x^r}\,\left(
r + \sum_{j=r+1}^b\sum_{k=1}^r \binom{j-1}{k-1}x^{k-1}(1-x)^{j-k}\right)\,dx.
\end{equation}
\end{theorem}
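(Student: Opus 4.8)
The plan is to recognize the right-hand side of \eqref{e:slpwinprob} as a Riemann sum over the partition of $[\alpha,1]$ with mesh $1/n$ and nodes $x=i/n$, and to pass to the limit. As $i$ runs over $s+1,\dots,n-1$, the nodes $x=i/n$ fill out $[\alpha,1]$, since $s/n\to\alpha$. First I would dispose of the last term of \eqref{e:slpwinprob}: by \eqref{idfalfac}, $(s)_r/(n-1)_r=\binom{s}{r}/\binom{n-1}{r}\to\alpha^r$, so that $\frac{(s)_r}{(n-1)_r}\cdot\frac{b}{n}\to\alpha^r\cdot 0=0$, and this boundary term drops out. After factoring $1/n$ out of the bracket, the remaining sum is $\frac1n\sum_{i=s+1}^{n-1}\psi_n(i/n)$ with $\psi_n(i/n)=\frac{(s)_r}{(i-1)_r}\bigl(r+\sum_{j=r+1}^b\sum_{k=1}^r\frac{\binom{j-1}{k-1}\binom{n-j}{i-k}}{\binom{n-1}{i-1}}\bigr)$.

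Next I would identify the two factors of $\psi_n$ with the pieces of the integrand, establishing uniform convergence on $[\alpha,1]$. For the prefactor, write $\frac{(s)_r}{(i-1)_r}=\prod_{a=0}^{r-1}\frac{s-a}{i-1-a}$; since $s=\alpha n+o(n)$ and $i-1=xn-1$ with $x\ge\alpha>0$, each of the $r$ factors tends to $\alpha/x$ uniformly, whence $\frac{(s)_r}{(i-1)_r}\to\alpha^r x^{-r}$ uniformly in $x\in[\alpha,1]$. For the hypergeometric factor of Lemma \ref{l:hypprob} I would use the symmetry identity
\[
\frac{\binom{j-1}{k-1}\binom{n-j}{i-k}}{\binom{n-1}{i-1}}
=\binom{j-1}{k-1}\,\frac{(i-1)_{k-1}\,(n-i)_{j-k}}{(n-1)_{j-1}},
\]
in which the number of factors in each falling factorial is bounded by $b$, independently of $n$. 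Since $i-1=xn-1$ and $n-i=(1-x)n$, dividing numerator and denominator by the matching power $n^{j-1}$ shows this factor tends to $\binom{j-1}{k-1}x^{k-1}(1-x)^{j-k}$, again uniformly on $[\alpha,1]$ because only boundedly many factors are involved and the denominator stays of exact order $n^{j-1}$.

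Combining, $\psi_n\to\phi$ uniformly on $[\alpha,1]$, where
\[
\phi(x)=\alpha^r x^{-r}\Bigl(r+\sum_{j=r+1}^b\sum_{k=1}^r\binom{j-1}{k-1}x^{k-1}(1-x)^{j-k}\Bigr)
\]
is continuous and bounded on $[\alpha,1]$. Splitting $\psi_n=\phi+(\psi_n-\phi)$, the correction contributes at most $\sup_{[\alpha,1]}|\psi_n-\phi|\to 0$ (there are at most $n$ terms, each weighted by $1/n$), while $\frac1n\sum_{i=s+1}^{n-1}\phi(i/n)$ is a genuine Riemann sum converging to $\int_\alpha^1\phi(x)\,dx$. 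This integral is exactly the right-hand side of \eqref{e:slpwinprobass}.

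I expect the only real work to lie in making the two convergences uniform over the full interval $[\alpha,1]$, in particular near the upper endpoint $x=1$, where $i$ is close to $n$ and the hypergeometric sample nearly exhausts the population. There the factor $(1-x)^{j-k}$ correctly tends to $0$, and uniform control follows from the bounded number of factors together with $x\ge\alpha>0$ keeping all denominators of the right order, so no genuine singularity appears. A final bookkeeping point is that replacing the lower summation index $\alpha n$ by $s+1$ alters only $o(n)$ terms, each of size $O(1/n)$, and is therefore harmless in the limit.
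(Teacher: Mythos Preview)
Your proposal is correct and follows essentially the same approach as the paper: both rewrite the hypergeometric weight via the symmetry identity (the paper phrases it as swapping sample size and number of successes, yielding $\binom{i-1}{k-1}\binom{n-i}{j-k}/\binom{n-1}{j-1}$, which is exactly your falling-factorial form), then pass to the binomial limit $\binom{j-1}{k-1}x^{k-1}(1-x)^{j-k}$, handle $(s)_r/(i-1)_r\to(\alpha/x)^r$ factor by factor, and interpret the sum as a Riemann sum. Your treatment is in fact more careful than the paper's, which does not explicitly argue uniformity or discuss the $o(n)$ discrepancy between $s+1$ and $\alpha n$; these refinements are welcome but do not constitute a different route.
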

\begin{proof}
Consider the hypergeometric probability
$\binom{j-1}{k-1}\binom{n-j}{i-k}/\binom{n-1}{i-1}$ in the expression
of the winning probability \eqref{e:slpwinprob} (see also Lemma \ref{l:hypprob}).
Swapping the parameters of sample size and the number of successes, we know
that this probability is equal to
$\binom{i-1}{k-1}\binom{n-i}{j-k}/\binom{n-1}{j-1}$.
This is interpreted as the probability of finding $k-1$ successes
in a sample of size $j-1$ when this sample is drawn without replacement
from a population of size $n-1$ containing a total number of $i-1$ possible successes.
We notice in expression \eqref{e:slpwinprob} that both population size $n-1$
and the total number of $i-1$ successes tend to infinity, proportionally,
whereas the sample size $j-1$ remains fixed. Hence, we may approximate
the hypergeometric probability by a binomial probability:
\begin{equation}
\label{e:asshyper}
\begin{split}
& \frac{\binom{i-1}{k-1}\binom{n-i}{j-k}}{\binom{n-1}{j-1}} =
\binom{j-1}{k-1}\,\left(\frac{i-1}{n-1}\right)^{k-1}\,
\left(\frac{n-i}{n-1}\right)^{j-k} + o(1)\\
& \quad =\binom{j-1}{k-1}x^{k-1}\,(1-x)^{j-k} +o(1),
\end{split}
\end{equation}
where $i,n\to\infty$ such that $(i-1)/(n-1)\to x\in(0,1)$.
Notice that equivalently, $i/n\to x$.
\par\bigskip\noindent
Next, consider the ratio $(s)_r/(i-1)_r$:
\begin{equation}\label{e:assratio}
\begin{split}
& \frac{(s)_r}{(i-1)_r}
=\prod_{k=0}^{r-1}\frac{s-k}{i-1-k}
=\prod_{k=0}^{r-1}
\frac{\frac{s}{n}-\frac{k}{n}}{\frac{i}{n}-\frac{1+k}{n}}\\
&\quad
=\prod_{k=0}^{r-1}\frac{\alpha+o(1)}{\frac{i}{n}+o(1)}
= \left(\frac{\alpha}{x}\right)^r +o(1)\quad(n\to\infty).
\end{split}
\end{equation}
Combining the two asymptotics \eqref{e:asshyper} and \eqref{e:assratio},
it is easily seen that the first term
in the expression of the winning probability \eqref{e:slpwinprob}, i.e., the $\Sigma$ part,
can be considered as a Riemann sum
converging to the integral \eqref{e:slpwinprobass}.
Finally, the last term in the expression
of the winning probability \eqref{e:slpwinprob}
is clearly less than $b/n$, and thus converges to
zero as $n\to\infty$.
\end{proof}
Denote the integrand in expression \eqref{e:slpwinprobass}
of the asymptotic winning probability by $f(x)$. Thus,
the expression is
\[
P_{\rm SLP}^{(\infty)}(\pi(\alpha,r))= \alpha^r\int_\alpha^1f(x)\,dx.
\]
The function $\alpha\in(0,1)\mapsto P_{\rm SLP}^{(\infty)}(\pi(\alpha,r))$
is unimodal concave, thus making it easy to find numerically
the optimal $\alpha$ by solving the first-order condition
using the bisection procedure. Hence, we can compare empirically
the asymptotic optimal winning probability with
(finite) optimal winning probabilities. As an example, we set
$b=5, r=3$. Then we find $\alpha^*=0.5046$ and
$P_{\rm SLP}^{(\infty)}(\pi(\alpha^*,r))=0.765697$. Figure \ref{f:convb5r3}
shows the winning probabilities $P_{\rm SLP}^{(n)}(\pi(s^*,r))$,
where $s^*=s^*(r)$ denotes the optimal position threshold given rank level $r$ and
the number of candidates $n$, obtained by optimizing the
$P_{\rm SLP}^{(n)}(\pi(s,r))$ with respect to $s$ (see Section~\ref{s:num}).

\begin{figure}[htb]
\begin{center}
\includegraphics[width=8cm]{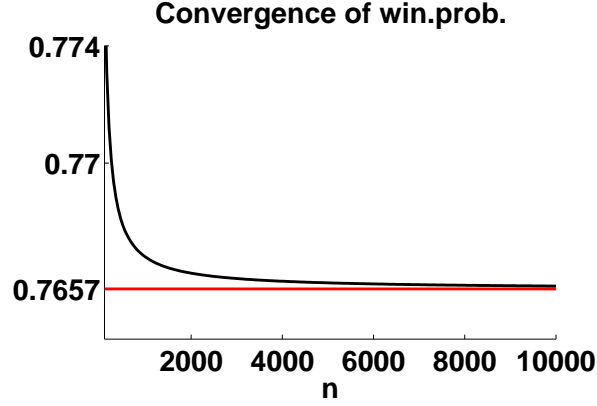}
\caption{\textit{Optimal finite and asymptotic winning probabilities for
single-level policies with $b=5, r=3$.}}
\label{f:convb5r3}
\end{center}
\end{figure}
\section{Double-level policies}\label{s:dlp}
A natural extension of the single-level policies is the class
of double-level policies for the secretary problem
where the objective is to pick one of the $ b $ best objects from $ n $ objects
consecutively arriving one by one in the usual random fashion.
Let be given two rank levels $1\leq r_1<r_2\leq b$,
and two position thresholds $r_1\leq s_1<s_2\leq n-1$
(we discard the trivial cases of $s_2=n$ which gives again a single-level
policy, and $s_1<r_1$ which leads to stopping at position $s_1+1$).
The double-level policy says to observe the first $s_1$ presented objects
without picking any; next, from objects at positions $s_1+1$ up to $s_2$
the first one encountered with a relative rank higher or equal
than $ r_1 $ is picked; if no such object appears,
the first object at positions $s_2+1$ up to $n-1$ is selected which
has a relative rank of at least $r_2$; finally, if all these $ n-1 $ items
are not picked, the last object is certainly picked independent of its
relative rank $ Y_n $. Slightly abusing, we denote again by $ \pi = \pi(s,r) $ such
a double-level policy and by $ P_{\rm DLP}(\pi) $ its winning probability.
\begin{theorem}\label{t:dlpwinprob}
The double-level policy given by rank levels $1\leq r_1<r_2\leq b$,
and position thresholds $r_1\leq s_1<s_2\leq n-1$
has winning probability
\begin{equation}\label{e:dlpwinprob}
\begin{split}
& P_{\rm DLP}(\pi(s,r)) = \sum_{i=s_1+1}^{s_2}\,\frac{(s_1)_{r_1}}{(i-1)_{r_1}}\,
\left(\frac{r_1}{n}\, + \, \frac{1}{n} \sum_{j=r_1+1}^b \sum_{k=1}^{r_1}
\frac{\binom{j-1}{k-1}\binom{n-j}{i-k}}{\binom{n-1}{i-1}}\right)\\
& + \sum_{i=s_2+1}^{n-1}\,\frac{(s_1)_{r_1}\,(s_2-r_1)_{r_2-r_1}}{(i-1)_{r_2}}\,
\left(\frac{r_2}{n}\, + \, \frac{1}{n} \sum_{j=r_2+1}^b \sum_{k=1}^{r_2}
\frac{\binom{j-1}{k-1}\binom{n-j}{i-k}}{\binom{n-1}{i-1}}\right)\\
& + \frac{(s_1)_{r_1}\,(s_2-r_1)_{r_2-r_1}}{(n-1)_{r_2}}\,\frac{b}{n}.
\end{split}
\end{equation}
\end{theorem}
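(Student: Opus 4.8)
The plan is to extend the single-level argument of Theorem~\ref{t:slpwinprob} to the double-level setting by decomposing the winning event according to \emph{where} the selection happens: in the first active window $s_1+1\le i\le s_2$, in the second active window $s_2+1\le i\le n-1$, or at the forced final position $i=n$. The three displayed terms in \eqref{e:dlpwinprob} correspond exactly to these three regimes, so each term should drop out of a separate probability computation, mirroring the single-level proof but with the second window now carrying an extra factor that records ``no acceptable object appeared in the first window.''

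First I would handle the first window. For $s_1+1\le i\le s_2$ the event that the policy picks object $i$ with absolute rank $j\le b$ is
\[
B^i_j=\{\min\{Y_{s_1+1},\ldots,Y_{i-1}\}>r_1,\ Y_i\le r_1,\ X_i=j\},
\]
which is \emph{identical in form} to the single-level event $A^i_j$ but with threshold $r_1$ and starting position $s_1$. By the same independence observation (the $Y_k$ for $k<i$ are independent of $X_i$ and $Y_i$) I would factor $\Prob(B^i_j)=\Prob(\min>r_1)\,\Prob(Y_i\le r_1\mid X_i=j)\,\Prob(X_i=j)$ and invoke Lemma~\ref{l:minprob} with level $r_1$ (giving $(s_1)_{r_1}/(i-1)_{r_1}$) together with Lemma~\ref{l:hypprob} with level $r_1$. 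Summing over $j=1,\ldots,b$ and $i=s_1+1,\ldots,s_2$, and splitting the $j$-sum into $j\le r_1$ (contributing $r_1/n$) and $j>r_1$, reproduces the first term of \eqref{e:dlpwinprob} verbatim.

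The second window is where the genuinely new work lies, and I expect this to be the main obstacle. Here the selection event at position $i$ with $s_2+1\le i\le n-1$ is
\[
C^i_j=\{\min\{Y_{s_1+1},\ldots,Y_{s_2}\}>r_1,\ \min\{Y_{s_2+1},\ldots,Y_{i-1}\}>r_2,\ Y_i\le r_2,\ X_i=j\},
\]
encoding that no relative rank $\le r_1$ occurred in the first window \emph{and} no relative rank $\le r_2$ occurred strictly between $s_2$ and $i$. The key computation is the probability of the joint minimum event, which I would argue equals $(s_1)_{r_1}(s_2-r_1)_{r_2-r_1}/(i-1)_{r_2}$. The clean way to see this is a counting argument in the spirit of Lemma~\ref{l:minprob}: among a uniformly random arrangement of the relative order of the first $i-1$ objects, the event asks that the $r_1$ globally best of these lie within the first $s_1$ positions, and that the next $r_2-r_1$ best lie within the first $s_2$ positions (automatically after $s_1$, since the top $r_1$ already occupy slots among the first $s_1$). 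Placing the top $r_1$ ranks injectively into the $s_1$ available positions and then the subsequent $r_2-r_1$ ranks injectively into the remaining $s_2-r_1$ positions, divided by the injections of all $r_2$ ranks into $i-1$ positions, yields precisely the stated falling-factorial ratio; I would verify this by the same injective-function counting used in Lemma~\ref{l:minprob}, being careful that the two stages are nested and that independence from $(X_i,Y_i)$ still holds because all indices are below $i$. Combined with Lemma~\ref{l:hypprob} at level $r_2$ and the split of the $j$-sum at $r_2$, this produces the second term.

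Finally, the forced terminal pick at $i=n$ is handled exactly as in the single-level case: the event requires both minimum conditions to hold up through position $n-1$, which by the same counting gives the factor $(s_1)_{r_1}(s_2-r_1)_{r_2-r_1}/(n-1)_{r_2}$, and then $X_n\le b$ contributes $b/n$ since $X_n$ is independent of the relative ranks that precede it. Summing the three regimes and noting $\Prob(X_i=j)=1/n$ throughout assembles \eqref{e:dlpwinprob}. The only subtlety to state carefully is the nested-minimum count in the second window; once that falling-factorial identity is established, the remainder is a bookkeeping repetition of the single-level proof applied twice with the two levels $r_1$ and $r_2$.
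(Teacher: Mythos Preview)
Your proposal is correct and follows essentially the same approach as the paper: both reduce the argument to the single-level proof structure, isolate the nested-minimum probability $\Prob(\min\{Y_{s_1+1},\ldots,Y_{s_2}\}>r_1,\ \min\{Y_{s_2+1},\ldots,Y_{i-1}\}>r_2)$ as the only genuinely new ingredient, and establish it by the same permutation-counting (place ranks $1,\ldots,r_1$ in the first $s_1$ slots, ranks $r_1+1,\ldots,r_2$ in the remaining $s_2-r_1$ slots of the first $s_2$, divide by $(i-1)_{r_2}$). One minor wording issue: your parenthetical ``automatically after $s_1$'' is misleading---ranks $r_1+1,\ldots,r_2$ may land anywhere in the first $s_2$ positions, not only beyond $s_1$---but your actual count of $s_2-r_1$ available positions is correct and matches the paper.
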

\begin{proof}
Similar to the proof of Theorem \ref{t:slpwinprob}, once we have established that
\[
\Prob(\min\{Y_{s_1+1},\ldots,Y_{s_2}\} > r_1; \;
\min\{Y_{s_2+1},\ldots,Y_{i-1}\} > r_2) =
\frac{(s_1)_{r_1}\,(s_2-r_1)_{r_2-r_1}}{(i-1)_{r_2}}.
\]
This can be proved as follows. Let $A$ be the event of concern, then
$ \Prob(A) $ is the probability that the rankings
$1,2,\ldots,r_1 $ are contained in the first $ s_1 $ positions of a random permutation of the
numbers $ 1,2,\ldots,i-1 $, and the rankings
$r_1+1,r_1+2,\ldots,r_2 $ are contained in the first $ s_2 $ positions.
However, any permutation for which $1,2,\ldots,r_1 $ are contained in the first
$ s_1 $ positions, leaves $s_2-r_1$ positions for rankings $r_1+1,r_1+2,\ldots,r_2 $
in order to become a `feasible' permutation; the remaining $i-1-r_2$ rankings
can be positioned arbitrary. For $\Prob(A)$ we divide the number of
feasible permutations by the total number of permutations:
\[
\Prob(A) = \frac{(s_1)_{r_1}\,(s_2-r_1)_{r_2-r_1}\,(i-1-r_2)!}{(i-1)!}
=\frac{(s_1)_{r_1}\,(s_2-r_1)_{r_2-r_1}}{(i-1)_{r_2}}.
\]
\end{proof}
\begin{theorem}\label{t:dlpwinprobass}
Fix rank levels $1\leq r_1<r_2\leq b$, and let $s_1=\alpha_1 n+o(n)$
and $s_2=\alpha_2 n+o(n)$ (as $n\to\infty$), where $0<\alpha_1<\alpha_2<1$.
Then
\begin{equation}\label{e:dlpwinprobass}
\begin{split}
& \lim_{n\to\infty} P_{\rm DLP}^{(n)}(\pi(s,r)) =
\alpha_1^{r_1}\int_{\alpha_1}^{\alpha_2}\frac{1}{x^{r_1}}\,\left(
r_1 + \sum_{j=r_1+1}^b\sum_{k=1}^{r_1} \binom{j-1}{k-1}x^{k-1}(1-x)^{j-k}\right)\,dx\\
& \quad +
\alpha_1^{r_1}\alpha_2^{r_2-r_1}\int_{\alpha_2}^{1}\frac{1}{x^{r_2}}\,\left(
r_2 + \sum_{j=r_2+1}^b\sum_{k=1}^{r_2} \binom{j-1}{k-1}x^{k-1}(1-x)^{j-k}\right)\,dx.
\end{split}
\end{equation}
\end{theorem}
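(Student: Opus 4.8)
The plan is to mirror exactly the asymptotic argument already carried out in the proof of Theorem~\ref{t:slpwinprobass}, now applied termwise to the three-part expression \eqref{e:dlpwinprob} for $P_{\rm DLP}(\pi(s,r))$. The key observation is that the double-level winning probability decomposes into two Riemann-type sums (over $i=s_1+1,\ldots,s_2$ and over $i=s_2+1,\ldots,n-1$) plus a vanishing boundary term, and each sum has precisely the structure already analyzed in the single-level case. So the proof reduces to identifying the limits of the relevant ratios of falling factorials and invoking the hypergeometric-to-binomial approximation \eqref{e:asshyper} that has already been established.

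First I would treat the last term. Since $(s_1)_{r_1}(s_2-r_1)_{r_2-r_1}/(n-1)_{r_2}\leq 1$, the final boundary term is bounded by $b/n$ and hence vanishes as $n\to\infty$, exactly as in the single-level proof. Next I would handle the first double sum. The ratio $(s_1)_{r_1}/(i-1)_{r_1}$ converges to $(\alpha_1/x)^{r_1}$ by the identical computation as in \eqref{e:assratio}, with $i/n\to x$, and the inner hypergeometric probabilities converge to $\binom{j-1}{k-1}x^{k-1}(1-x)^{j-k}$ by \eqref{e:asshyper}. Pulling the constant $\alpha_1^{r_1}$ out front, the first sum is a Riemann sum (with mesh $1/n$, since each term carries a factor $1/n$) over the interval $x\in(\alpha_1,\alpha_2)$ converging to the first integral in \eqref{e:dlpwinprobass}.

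The second double sum is the genuinely new piece and is where I expect the only real work to lie. Here I must determine the limit of the compound ratio $(s_1)_{r_1}(s_2-r_1)_{r_2-r_1}/(i-1)_{r_2}$. The plan is to factor it as
\[
\frac{(s_1)_{r_1}}{(i-1)_{r_1}}\cdot\frac{(s_2-r_1)_{r_2-r_1}}{(i-1-r_1)_{r_2-r_1}},
\]
using $(i-1)_{r_2}=(i-1)_{r_1}\,(i-1-r_1)_{r_2-r_1}$. The first factor tends to $(\alpha_1/x)^{r_1}$ as before. For the second factor, each of the $r_2-r_1$ descending factors in numerator and denominator is of the form $(s_2-\text{const})/(i-1-\text{const})$, whose ratio of leading terms is $\alpha_2/x$; hence the second factor tends to $(\alpha_2/x)^{r_2-r_1}$. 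Multiplying gives the limit $\alpha_1^{r_1}\alpha_2^{r_2-r_1}/x^{r_2}$, which is exactly the prefactor appearing in the second integrand of \eqref{e:dlpwinprobass}. Combining this with \eqref{e:asshyper} for the inner sums (now with level $r_2$), the second double sum is a Riemann sum over $x\in(\alpha_2,1)$ converging to the second integral.

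Finally I would assemble the three pieces: the first sum gives the first integral, the second sum gives the second integral (with the constants $\alpha_1^{r_1}$ and $\alpha_1^{r_1}\alpha_2^{r_2-r_1}$ pulled out of their respective sums), and the boundary term vanishes. A minor technical point worth a remark is the uniform control needed to pass from pointwise convergence of the summands to convergence of the sums: since all integrands are continuous and bounded on the relevant closed subintervals of $(0,1)$ (the factors $1/x^{r_1}$ and $1/x^{r_2}$ are bounded away from the singularity at $0$ because $\alpha_1,\alpha_2>0$), the Riemann-sum convergence is standard and requires no delicate estimate. Thus the main obstacle is purely bookkeeping: correctly splitting the falling-factorial ratio so that its limit reads off as $\alpha_1^{r_1}\alpha_2^{r_2-r_1}x^{-r_2}$.
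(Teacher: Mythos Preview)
Your proposal is correct and follows exactly the approach the paper intends: the paper's own proof consists of the single line ``Similar to the proof of Theorem~\ref{t:slpwinprobass},'' and your write-up is precisely the natural unpacking of that remark, including the one new ingredient (the factorization of $(s_1)_{r_1}(s_2-r_1)_{r_2-r_1}/(i-1)_{r_2}$ yielding the limit $\alpha_1^{r_1}\alpha_2^{r_2-r_1}x^{-r_2}$).
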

\begin{proof}
Similar to the proof of Theorem \ref{t:slpwinprobass}.
\end{proof}
Denote the two integrands in expression \eqref{e:dlpwinprobass}
of the asymptotic winning probability by $f_1(x)$ and $f_2(x)$, respectively. Thus,
the expression is
\[
P_{\rm DLP}^{(\infty)}(\pi(\alpha,r))= \alpha_1^{r_1}\int_{\alpha_1}^{\alpha_2}f_1(x)\,dx
+ \alpha_1^{r_1}\alpha_2^{r_2-r_1}\int_{\alpha_2}^{1}f_2(x)\,dx.
\]
The functions
$\alpha_1\in(0,\alpha_2)\mapsto P_{\rm DLP}^{(\infty)}(\pi(\alpha,r))$ (keeping $\alpha_2$ fixed),
and $\alpha_2\in(\alpha_1,1)\mapsto P_{\rm DLP}^{(\infty)}(\pi(\alpha,r))$ (keeping $\alpha_1$ fixed)
are unimodal concave, thus making it easy to find numerically
the optimal $\alpha=(\alpha_1,\alpha_2)$ by solving the first-order conditions
using the bisection procedure. Hence, we can compare empirically
the asymptotic optimal winning probability with
(finite) optimal winning probabilities. As an example, we set
$b=10, r_1=2, r_2=6$. Then we find $\alpha_1^*=0.3630$,
$\alpha_2^*=0.6446$ and $P_{\rm DLP}^{(\infty)}(\pi(\alpha^*,r))=0.957643$.
Figure \ref{f:convb10r12r26}
shows the winning probabilities $P_{\rm DLP}^{(n)}(\pi(s^*,r))$,
where $s^*=(s^*_1(r),s^*_2(r))$ denotes the optimal position thresholds given rank
levels $r=(r_1,r_2)$ and
the number of candidates $n$, obtained by optimizing the
$P_{\rm DLP}^{(n)}(\pi(s,r))$ with respect to $s=(s_1,s_2)$ (see Section~\ref{s:num}).

\begin{figure}[htb]
\begin{center}
\includegraphics[width=8cm]{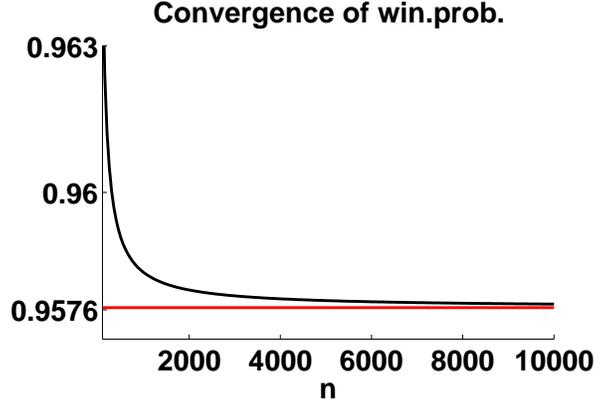}
\caption{\textit{Optimal finite and asymptotic winning probabilities for
double-level policies with $b=5, r_1=2, r_2=6$.}}
\label{f:convb10r12r26}
\end{center}
\end{figure}
\section{Numerical Results}\label{s:num}
We can find numerically the optimal single-level policy
for a given number of candidates $n$, and a given worst allowable rank $b$,
in a two-step approach as:
\[
\max_{r=1,\ldots,b}\; \max_{s=r,\ldots,n-1} P_{\rm SLP}(\pi(s,r)).
\]
Thus, in the first step, we fix also a rank level $r$ (between $1$ and $b$).
The function $\{r,\ldots,n-1\}\to P_{\rm SLP}(\pi(\cdot,r))$ is unimodal
concave (this follows after a marginal analysis), and thus
we can solve numerically for the optimal position threshold $s^*=s^*(r)$,
and the associated winning probability $P_{\rm SLP}(\pi(s^*,r))$.
The second step is simply a complete enumeration
to determine
\[
\max \{ P_{\rm SLP}(\pi(s^*,r)): r=1,\ldots b\}.
\]
However, it can be shown that the function
$\{1,\ldots,b\}\to P_{\rm SLP}(\pi(s^*(\cdot),\cdot))$
is unimodal, which yields a shortcut in the second step.
To check our numerical results, we have constructed an alternative
method to find the optimal position threshold $s^*(r)$, given $n,b,r$,
namely by dynamic programming (see the Appendix for the details).
\par\bigskip\noindent
Similarly, in the case of double-level policies, we have constructed a two-step approach, where the first step finds
the optimal position thresholds $s_1^*=s_1^*(r_1,r_2)$ and $s_2^*=s_2^*(r_1,r_2)$ for any given pair of rank levels
$(r_1,r_2)$, and its associated winning probability $P_{\rm DLP}(\pi(s^*,r))$ (vector notation for $s$ and $r$). Then a
straightforward search procedure determines
\[
\max_{r_1=1,\ldots,b-1}\; \max_{r_2=r_1+1,\ldots,b} P_{\rm DLP}(\pi(s^*,r)).
\]
Finally, as mentioned in the introductory section,
dynamic programming can be applied easily to
obtain the optimal (multi-level) policy \cite{franksamuels,guseinzade}.
\par\bigskip\noindent
Table \ref{t:ren} gives  the relative errors of the winning probabilities
of the optimal single and double-level policies for $n=100, 250$, and $n=1000$,
and for $b=5,10,\ldots,25$, relatively to the corresponding optimal multi-level policies.
The double-level policy gives extremely small errors for larger $b$,
up to very large population sizes $n$. Also we notice that
the errors (for a given $b$) increase slightly as $n$ increases.
\begin{table}[htb]
\begin{center}
\caption{\textit{Relative errors (\%) of the optimal single- and double-level policies.}}
\label{t:ren}
\medskip
\begin{tabular}{l | r r r | r r r}
& \multicolumn{3}{c}{single-level} & \multicolumn{3}{c}{double-level}\\
& $n=100$ & $n=250$ & $n=1000$ & $n=100$ & $n=250$ & $n=1000$\\
\hline
$b=5$ & 10.630 & 10.854 & 10.965 & 3.286  & 3.331 & 3.354\\
$b=10$ & 5.262 & 5.674 & 5.876 & 1.702 & 1.841 & 1.911 \\
$b=15$ & 2.095 & 2.467 & 2.658 & 0.568 & 0.686 & 0.746\\
$b=20$ & 0.739  &0.996  & 1.131 & 0.155 & 0.221 & 0.258\\
$b=25$ & 0.239 & 0.381 & 0.464 & 0.036 & 0.066 & 0.084
\end{tabular}
\end{center}
\end{table}
\noindent
Finally, we have computed the optimal asymptotic winning probabilities
of the level policies:
\[
\max_{r=1,\ldots,b}\; P_{\rm SLP}^{(\infty)}(\pi(\alpha^*,r)),
\]
where $\alpha^*=\alpha^*(r)$ is the associated proportional rank level given $r$, obtained by the procedure elaborated
in Section \ref{s:slp}. Similarly, for the double-level policies
\[
\max_{r_1=1,\ldots,b-1}\; \max_{r_2=r_1+1,\ldots,b} P_{\rm DLP}^{(\infty)}(\pi(\alpha^*,r))
\]
yields the optimal asymptotic winning probabilities. Table \ref{t:asres} summarizes our computations for a range of
$b$-values. Also we included the asymptotic results of the  optimal (full) policy, given in Frank and Samuels
\cite{franksamuels} ($t_1=\lim_{n\to\infty} s_1^*/n$ for the optimal position threshold). Again we see how accurate the
approximations of the level policies are. Notice that the $\alpha^*$ thresholds are not
monotone in $b$, this is due to the discrete character of the
levels $r_1$ and $r_2$.

\begin{table}[htb]
\begin{center}
\caption{\textit{Asymptotics of the optimal multi-level, single- and double-level policies.}}
\label{t:asres}
\medskip
\begin{tabular}{r | c c | r c c| r r c c c c}
$b$ & $t_1$ & $P(\pi)$
& $r$ & $\alpha$ & $P_{\rm SLP}(\pi)$
& $r_1$ & $r_2$ & $\alpha_1$ & $\alpha_2$ & $P_{\rm DLP}(\pi)$\\
\hline
 5 & 0.3255 & 0.860347 &  3 & 0.5046 & 0.765697 & 1 &  4 & 0.2996 & 0.6559 & 0.831420\\
10 & 0.3129 & 0.976530 &  4 & 0.4692 & 0.918487 & 2 &  6 & 0.3630 & 0.6446 & 0.957643\\
15 & 0.3068 & 0.995902 &  6 & 0.5152 & 0.968786 & 3 &  9 & 0.3960 & 0.6822 & 0.988265\\
20 & 0.3031 & 0.999271 &  7 & 0.4990 & 0.987504 & 4 & 12 & 0.4164 & 0.7051 & 0.996561\\
25 & 0.3006 & 0.999869 &  9 & 0.5270 & 0.994938 & 5 & 14 & 0.4304 & 0.6965 & 0.998961
\end{tabular}
\end{center}
\end{table}
\section{Conclusion}\label{s:con}

For the considered generalized secretary problem of selecting one of the $ b $ best out of a
group of $ n $ we have obtained closed expressions for the probability of success for all
possible single- and double-level policies. For any given finite values of $ n $ and $ b $ these
expressions can be used to obtain the optimal single-level policy
respectively optimal double-level policy in a straightforward manner. Moreover, asymptotically
for $ n \rightarrow \infty $ we have also obtained closed expressions for the winning probability
for relevant families of single-level and
double-level policies. Optimizing this expression for the family of single-level policies an asymptotic optimal rank
level $ r $ and corresponding optimal position threshold fraction $ \alpha^* $ and asymptotic winning probability are
easily obtained. Similarly we have done such asymptotic analysis and optimization for the relevant family of
double-level policies. Both for the single-level and double-level policies we confirmed numerically for $ b =5 $ that
the winning probabilities for optimal finite and double level policies for finite values of $ n $ converge if $ n $
increases to the (respectively single-level and double-level) optimal asymptotic winning probabilities.
\par\bigskip\noindent
Finally, we computed for varying $ b $ and $ n $ the optimal single-level and double-level policies and corresponding
winning probabilities and compared the results to the overall optimal policy which is determined by $ b $ position
thresholds. We found that the single-level policies and especially the double-level policies perform nearly as well as
the overall optimal policy. In particular for a generalized secretary problem with a larger value of $ b $ applying the
optimal single-level or double-level policy could be considered, because implementation of the overall optimal policy
using $ b $ different thresholds is unattractive compared to using only one or two thresholds for implementing the
policy. Besides for large $ b $ the gain in performance of the overall optimal policy over the optimal double-level
policy is very small.

%
%%%%%%%%%%%%%%%%%%%            references        %%%%%%%%%%%%%%%%%%%%%%%%
\large
\renewcommand{\baselinestretch}{1.0}
\normalsize

\large
\renewcommand{\baselinestretch}{1.3}
\normalsize

\section*{Appendix: Dynamic Programming}
The dynamic programming method might be applied to find numerically
the optimal single-, double- and multiple-level (`full') policies.
Here, we summarize the algorithm for the single-level policy; it
is straightforward how to generalize the algorithm to the double-level,
and the multiple-level cases.
\par\bigskip\noindent
Define the single-level policy with threshold $s$ and level $r$,
denoted $\pi(s,r)$,  by its actions
\begin{align*}
i\leq s: & \; a_i(k) = 1 \; \mbox{for all $k$}\\
s+1\leq i\leq n-1: & \; a_i(k)
= \begin{cases}
0, & \; \text{if}\; k\leq r\\
1, & \; \text{if}\; k>r;
\end{cases}\\
i = n: & \; a_n(k)=0 \; \mbox{for all $k$},
\end{align*}
where 1 means to continue, and 0 means to stop and select this candidate.
We restrict to $1\leq r\leq b$.
Denote by $P_{\rm SLP}(\pi(s,r))$ the probability of winning when $\pi(s,r)$ is applied.
Given level $r$ we determine the optimal threshold $s^*(r)$,
defined by
\[ s^*(r) = \arg\max_s\; P_{\rm SLP}(\pi(s,r)).\]
We use dynamic programming to find it. Define for $i=1,\ldots,n-1$ the value $f_i(1)$ to be the maximal probability of
winning when $Y_i\leq r$ is observed, and $f_i(2)$ to be the maximal probability of winning when $Y_i> r$.
The optimality equations are:
\begin{align*}
f_{n-1}(1) &= \max\, \left\{ \underbrace{\Prob(X_{n-1}\leq b\,|\,Y_{n-1}\leq r)}_{a=0},\,
\underbrace{\Prob(Y_n\leq b)=\frac{b}{n}}_{a=1}\right\}\\
f_{n-1}(2) &= \frac{b}{n};
\end{align*}
for $i=n-2,n-3,\ldots,r$
\begin{align*}
f_{i}(1) &= \max\, \{ \underbrace{\Prob(X_i\leq b\,|\,Y_i\leq r)}_{a=0},\,
\underbrace{f_i(2)}_{a=1}\}\\
f_{i}(2) &= \frac{r}{i+1}f_{i+1}(1) + \frac{i+1-r}{i+1}f_{i+1}(2),
\end{align*}
and for $i=r-1,r-2,\ldots,1$ (since then surely $Y_i\leq r$ and $Y_{i+1}\leq r$):
\begin{align*}
f_{i}(1) &= \max\, \{ \underbrace{\Prob(X_{i}\leq b)}_{a=0},\,
\underbrace{f_{i+1}(1)}_{a=1}\}\\
f_{i}(2) &= \mbox{not defined}.
\end{align*}
One can show that the result of this DP recursion is indeed a SLP by setting $s^*(r) = \max \{i: a^*_i=1\}$. Moreover,
note that probabilities $ \Prob(X_i\leq b\,|\,Y_i\leq r) $ occuring in the optimality equations can easily be obtained,
for example by applying Lemma \ref{l:hypprob} and Bayes' rule.
\end{document}